\newcommand{\Rr}{{\mathbb{R}}}
\newcommand{\Ee}{{\mathds{E}}}
\newcommand{\Pp}{{\mathcal{P}}}
\newcommand{\bepsilon}{{\bm{ \epsilon}}}
\newcommand{\bX}{{\bf X}}
\newcommand{\bZ}{{\bf Z}}
\newcommand{\bP}{{\bf P}}
\newcommand{\bv}{{\bf v}}
\def\d{{\rm d}}
\def\dx{{\rm d}x}
\def\dt{{\rm d}t}
\def\leq{\leqslant}
\def\geq{\geqslant}
\newtheorem{theorem}{Theorem}
\newtheorem{problem}{Problem}
\newtheorem{pro}[theorem]{Proposition}
\newtheorem{remark}[theorem]{Remark}
\renewcommand{\@algocf@capt@plain}{above}
\title{\LARGE \bf
Machine Learning architectures for price formation models with common noise
}
\author{
\thanks{King Abdullah University of Science and Technology (KAUST), CEMSE Division, Thuwal 23955-6900. Saudi Arabia. e-mail: diogo.gomes@kaust.edu.sa.}}
\author{
\thanks{King Abdullah University of Science and Technology (KAUST), CEMSE Division, Thuwal 23955-6900. Saudi Arabia. e-mail: julian.gutierrezpineda@kaust.edu.sa.}}
\author{
\thanks{}}
\author{Diogo Gomes$^{1}$, Julian Gutierrez$^{1}$, and Mathieu Lauri\`{e}re$^{2}$%
\thanks{Keywords: Mean Field Games; Price formation; Neural Networks}
\thanks{$^{1}$King Abdullah University of Science and Technology (KAUST), CEMSE Division, Thuwal 23955-6900. Saudi Arabia
        {\tt\small diogo.gomes@kaust.edu.sa}
        {\tt\small julian.gutierrezpineda@kaust.edu.sa}}%
\thanks{$^{2}$New York University Shanghai (NYU Shanghai), Shanghai 200122. China
        {\tt\small ml5197@nyu.edu}}
}
\begin{document}

\maketitle
\thispagestyle{empty}
\pagestyle{empty}

\begin{abstract}
%


We propose a machine learning method to solve a mean-field game price formation model with common noise. This involves determining the price of a commodity traded among rational agents subject to a market clearing condition imposed by random supply, which presents additional challenges compared to the deterministic counterpart. Our approach uses a dual recurrent neural network architecture encoding noise dependence and a particle approximation of the mean-field model with a single loss function optimized by adversarial training. We provide a posteriori estimates for convergence and illustrate our method through numerical experiments.

\end{abstract}
\section{INTRODUCTION}

In this work, we extend the use of machine learning (ML) techniques for the numerical solution
of the mean-field games (MFGs) price formation models, introduced in \cite{MLpriceDet2022},
to incorporate the common noise model from \cite{GoGuRi2021} (see also \cite{gomes2021randomsupply}). 
The goal is to determine the price $\varpi$ of a commodity with a noisy supply $Q$ traded among rational agents within a finite time horizon $T > 0$, under a market-clearing condition.
More precisely, we assume the supply function $Q$ satisfies the following stochastic differential equation (SDE)
\begin{equation}
\label{eq:SDE supply}
	\begin{cases}
\d Q(t) = b^S(Q(t),t)\d t + \sigma^S(Q(t),t)\d W(t),
\\
Q(0)=q_0
	\end{cases}
\end{equation}
where $q_0 \in \Rr$ and $W$ is a one-dimensional Brownian motion acting as common noise. The coefficients $b^S$ and $\sigma^S$ satisfy the usual Lipschitz conditions for existence and uniqueness of solutions (see \cite{BackwardSDE1997}). Because of \eqref{eq:SDE supply}, our model explains the price formation for commodities with continuous and smooth fluctuations, such as stocks, bonds, currencies, and continuously produced or consumed goods such as oil or natural gas. Additional sources of noise can be considered. For instance, sudden and discontinuous fluctuations can be modeled by adding Poisson jumps to \eqref{eq:SDE supply}.

Let $(\Omega,\mathcal{F},\mathds{F},\mathds{P})$ be a complete filtered probability space supporting $W$. Progressive measurability refers to the measurability with respect to this filtration, which we require for all stochastic processes. In this context, the MFG with common noise characterizing the price is the following.
\begin{problem} 
\label{problem:MFGs price problem Cnoise} 
Suppose that $H:\Rr^2\to \Rr$ is uniformly convex and differentiable in the second argument, $m_0$ is a probability measure on $\Rr$, and $u_T:\Rr\to\Rr$ is uniformly convex and differentiable. Find $m: [0,T]\times \Rr \to \Rr$, $u,Z:  [0,T]\times \Rr \times \Omega \to \Rr$, and $\varpi:  [0,T]\times \Omega \to \Rr$ progressively measurable, satisfying  $m\geq 0$ and 
\begin{equation}\label{eq:MFG system Cnoise}
\begin{cases}
-\d u +H(x,\varpi + u_x ) \d t = Z(t,x) \d W(t), 
\\
u(T,x)=u_T(x), 
\\
m_t - \left(H_p(x,\varpi + u_x)m\right)_x =0, 
\\
m(0,x)=m_0(x), 
\\
-\int_{\Rr} H_p(x,\varpi + u_x)m \dx = Q(t).
\end{cases}
\end{equation}
\end{problem} 
\smallskip
The previous problem generalizes the one introduced in \cite{gomes2018mean}, which corresponds to the case $\sigma^S =0$. The numerical solution of \eqref{eq:MFG system Cnoise} presents additional challenges compared to the deterministic counterpart, as the state space becomes infinite-dimensional.  \cite{gomes2021randomsupply} showed that \eqref{eq:MFG system Cnoise} is well-posed when $b^S$ and $\sigma^S$ are linear and $H$ is quadratic, obtaining semi-explicit solutions. 
Section \ref{sec:The mfg price problem} presents the derivation of \eqref{eq:MFG system Cnoise}.

In the absence of common noise, several numerical schemes have been proposed: 
Fourier series \cite{Yang2021}, semi-Lagrangian schemes \cite{CarliniSilva2014}, fictitious play \cite{HADIKHANLOO2019369}, and variational methods \cite{BenamouBrenierVarMFG}.  \cite{dayanikli2023machine} proposes an ML-based approach to solve bi-level Stackelberg problems between a principal and a mean field of agents by reformulating the problem as a single-level mean-field optimal control problem. \cite{LauriereEtAlSurvey} and \cite{carmonalauriere2021handbookdeep} survey deep learning and reinforcement learning methods applied to MFGs and mean-field control problems. However, these methods cannot handle general forms of common noise as the state space becomes infinite-dimensional. Recent works have circumvented this issue. \cite{FiniteStateCNoise} reduces continuous-time mean field games with finitely many states and common noise to a system of forward-backward systems of (random) ordinary differential equations. \cite{min2021signatured} used rough path theory and deep learning techniques.  However, the coupling in the price formation problem with common noise is given by an integral constraint in infinite dimensional spaces, which is beyond what standard methods can handle. In \cite{IEEEPotential}, the price formation model with common noise was converted into a convex variational problem with constraints and solved using ML, enforcing constraints by penalization.  This approach, however, introduces numerical instabilities. In contrast, our method includes the balance constraint in the loss functional as a Lagrange multiplier instead of a penalization.

Our method employs two recurrent neural networks (RNN) to approximate $\varpi$ and the optimal vector field agents follow using a particle approximation and a loss function that the RNNs optimize by adversarial training. 
We develop a posteriori estimates 
to confirm the convergence of our method, which is of paramount importance when no benchmarks are available. Given $f:[0,T] \times \Omega$, we write $\| f\| = \left(\Ee \left[ \|f\|_{L^2([0,T]}^2 \right] \right)^{1/2}$. We introduce additional notation in Section \ref{sec:A posteriori estimates}, where we prove the following main result:
\begin{theorem}
\label{Thm:Main result}
Suppose that $H$ is uniformly concave-convex in $(x,p)$, separable, with Lipschitz continuous derivatives, and $u_T$ is convex with Lipschitz continuous derivative. Let $(\bX,\bP)$ and $\varpi^N$ solve the $N$-player price formation problem with common noise, and let $(\tilde{\bX},\tilde{\bP})$ and $\tilde{\varpi}^N$ be an approximate solution to the $N$-player problem up to the error terms $\epsilon_H$ and $\epsilon_B$. Then, there exists $C>0$, depending on problem data, such that 
\begin{align*}
	\| \varpi^N - \tilde{\varpi}^N \|  &  \leq C \bigg( \|\epsilon_H\| + \|\epsilon_B\|\bigg).
\end{align*}
\end{theorem}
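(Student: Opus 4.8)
\noindent\textit{Proof idea.} The plan is to run the monotonicity (``energy'') argument for the $N$-player Hamiltonian system pathwise and then in expectation; with the residuals switched off it is precisely the uniqueness proof for the exact problem, so the task is to track how $\epsilon_H$ and $\epsilon_B$ propagate. Denote the components of $\bX,\bP$ and $\tilde\bX,\tilde\bP$ by $X^i,P^i,\tilde X^i,\tilde P^i$, set $\delta X^i=X^i-\tilde X^i$, $\delta P^i=P^i-\tilde P^i$, $\delta\varpi=\varpi^N-\tilde\varpi^N$, and let $\xi^i:=\delta\varpi+\delta P^i$ be the increment of $\varpi^N+P^i$, the argument that enters $H$; abbreviate $\|\delta\bX\|^2:=\Ee\!\int_0^T\frac1N\sum_i|\delta X^i|^2\,\d t$, and similarly for $\|\xi\|$ and $\|\delta\bP\|$. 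First I would apply It\^o's formula to $t\mapsto\frac1N\sum_i\delta X^i(t)\,\delta P^i(t)$ and integrate over $[0,T]$ in expectation. Since the exact and approximate particles start from the same sample of $m_0$, the term at $t=0$ vanishes; at $t=T$, the terminal conditions $P^i(T)=u_T'(X^i(T))$ (and its approximate analogue, or the corresponding residual folded into $\epsilon_H$) together with convexity of $u_T$ give $\frac1N\sum_i\delta X^i(T)\,\delta P^i(T)\ge0$, so this term is discarded with the right sign; the stochastic integrals from the backward equations for $P^i$ and $\tilde P^i$ are genuine martingales and vanish in expectation --- this needs an a priori $L^2$ bound on the associated $Z$-processes --- and since $X^i,\tilde X^i$ carry no martingale part, no It\^o correction appears.

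The heart of the argument is the drift. Inserting the state equation $\dot X^i=-H_p(X^i,\varpi^N+P^i)$, the costate equation $\d P^i=H_x(X^i,\varpi^N+P^i)\,\d t+Z^i\,\d W$, and their approximate analogues (valid up to the residuals collected in $\epsilon_H$), and writing $\Delta_p^i:=H_p(X^i,\varpi^N+P^i)-H_p(\tilde X^i,\tilde\varpi^N+\tilde P^i)$ and $\Delta_x^i:=H_x(X^i,\varpi^N+P^i)-H_x(\tilde X^i,\tilde\varpi^N+\tilde P^i)$, the It\^o identity --- after integration and discarding the boundary term --- reduces to $\Ee\!\int_0^T\frac1N\sum_i\big(\delta P^i\,\Delta_p^i-\delta X^i\,\Delta_x^i\big)\d t\le\mathcal R$, where $\mathcal R$ gathers the products of the components of $\epsilon_H$ with $\delta X^i,\delta P^i$, so that $\mathcal R\le C\|\epsilon_H\|\big(\|\delta\bX\|+\|\delta\bP\|\big)$ by Cauchy--Schwarz. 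The decisive step is to substitute $\delta P^i=\xi^i-\delta\varpi$: since $H$ is separable ($H_p$ independent of $x$, $H_x$ of $p$) and uniformly convex in $p$ and concave in $x$, the left-hand integrand is bounded below by $\lambda\big(\frac1N\sum_i|\xi^i|^2+\frac1N\sum_i|\delta X^i|^2\big)$ \emph{up to} the single cross term $-\delta\varpi\,\frac1N\sum_i\Delta_p^i$; and the two market-clearing identities $-\frac1N\sum_iH_p(X^i,\varpi^N+P^i)=Q=-\frac1N\sum_iH_p(\tilde X^i,\tilde\varpi^N+\tilde P^i)-\epsilon_B$ give $\frac1N\sum_i\Delta_p^i=\epsilon_B$, so that cross term is exactly $-\delta\varpi\,\epsilon_B$. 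This is where $\epsilon_B$ enters, and it enters linearly --- precisely because the balance constraint is kept as a Lagrange multiplier rather than penalized. Combining and using $\|\delta\bP\|\le\|\xi\|+\|\delta\varpi\|$ yields
\[
\lambda\big(\|\xi\|^2+\|\delta\bX\|^2\big)\ \le\ \|\delta\varpi\|\,\|\epsilon_B\|+C\,\|\epsilon_H\|\big(\|\xi\|+\|\delta\bX\|+\|\delta\varpi\|\big).
\]

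It remains to close the loop by controlling $\|\delta\bX\|$, $\|\delta\bP\|$ and $\|\delta\varpi\|$. Integrating the state equation in time gives $\|\delta\bX\|\le C(\|\xi\|+\|\epsilon_H\|)$; taking conditional expectations in the costate equation --- a standard BSDE estimate using Lipschitz continuity of $H_x$ and of $u_T'$ together with the previous bound --- gives $\|\delta\bP\|\le C(\|\xi\|+\|\epsilon_H\|)$; and inverting the market-clearing condition gives $\|\delta\varpi\|\le C(\|\epsilon_B\|+\|\delta\bP\|)$, by writing $\Delta_p^i=\beta^i\xi^i$ with $0<\mu\le\beta^i\le L$ --- which uses \emph{both} uniform convexity of $H$ in $p$ (the lower bound $\mu$) \emph{and} Lipschitz continuity of $H_p$ (the upper bound $L$), with constants independent of $N$. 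Substituting these three estimates into the displayed inequality, every surviving $\|\delta\varpi\|$, $\|\delta\bP\|$, $\|\delta\bX\|$ on the right is absorbed into the left by Young's inequality --- which closes because $\lambda$ is fixed by the uniform convexity/concavity of $H$ --- leaving $\|\xi\|\le C(\|\epsilon_H\|+\|\epsilon_B\|)$, hence $\|\varpi^N-\tilde\varpi^N\|=\|\delta\varpi\|\le C(\|\epsilon_H\|+\|\epsilon_B\|)$ with $C$ depending only on $T$ and the structural constants of $H$ and $u_T$, in particular not on $N$.

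The hard part will be twofold. On the technical side, the stochastic bookkeeping must be handled with care: that the stochastic integrals are true martingales (hence mean zero) rests on a priori $L^2$ control of the exact and approximate $Z$-processes, which must be established beforehand, and one must check that no quadratic-covariation term survives --- which is why it matters that the particle trajectories have no diffusion part. More fundamentally, the interplay between inverting the market-clearing condition and the final Young-type absorption is the delicate point: it is exactly here that the hypothesis ``$H$ uniformly concave--convex with Lipschitz continuous derivatives'' is used in full --- first so that $\epsilon_B$ appears only through $\delta\varpi\,\epsilon_B$ and the market-clearing map inverts with $N$-independent constants, and then so that all residual-cross terms on the right can be dominated by the coercive left-hand side.
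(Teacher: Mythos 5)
Your proposal is correct and follows essentially the same route as the paper: an It\^o-product (monotonicity) argument on $\sum_n (X^n-\tilde X^n)(P^n-\tilde P^n)$, coercivity from the uniform concavity--convexity of $H$ and convexity of $u_T$, the market-clearing constraint injecting $\epsilon_B$ linearly through the term $N\epsilon_B(\varpi^N-\tilde\varpi^N)$, a separate $L^2$ estimate on $\bP-\tilde\bP$ from the backward equation (the paper's Proposition~\ref{lem:Ps bound}), and a final absorption by Young's inequality. The only (minor) divergence is how $\varpi^N-\tilde\varpi^N$ is recovered at the end: you invert the market-clearing map using uniform convexity and Lipschitz continuity of $H_p$, whereas the paper uses the more elementary algebraic identity $\mathds{1}(\varpi^N-\tilde\varpi^N)=\big(\bP+\mathds{1}\varpi^N-(\tilde\bP+\mathds{1}\tilde\varpi^N)\big)-(\bP-\tilde\bP)$ together with the bound on $\|\bP-\tilde\bP\|$; both close the argument with $N$-independent constants.
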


\smallskip

We present our algorithm in Section \ref{sec:NN for price} and numerical results in Section \ref{sec: Numerical results} for the linear-quadratic setting. Nonetheless, our method can handle models outside the linear-quadratic framework. Moreover, the ML is well suited for higher-dimensional state spaces, where, for instance, several commodities are priced simultaneously. Section \ref{sec:Conclusions} contains concluding remarks and sketches future research directions.

\section{The mfg price problem with common noise}
\label{sec:The mfg price problem}

Price formation is a critical aspect of economic systems.
One example is load-adaptive pricing in smart grids, which motivates consumers to adjust their energy consumption based on changes in electricity prices. MFGs provide a mathematical framework for studying complex interactions between multiple agents, including buyers and producers in a market. Here, we revisit the underlying optimization problem in Problem \ref{problem:MFGs price problem Cnoise}. 

A representative player with an initial quantity $x_0\in \Rr$ at time $t=0$ selects a progressively measurable trading rate $v:[0,T]\times \Omega \to \Rr$ to minimize the cost functional mapping $v$ to
\begin{align}
\label{eq:Functional per agent 0}
& \Ee \left[ \int_0^T \left(L(X(t),v(t)) + \varpi(t) v(t) \right) \dt + u_T\left(X(T)\right) \right],
\end{align}
where $X$ solves
\begin{equation}
\label{eq:Agent dynamics}
\begin{cases}
\d X(t)=v(t) \dt,
\\
X(0)=x_0,
\end{cases}
\end{equation}
with $x_0 \sim m_0$. 
The Hamiltonian $H$ in \eqref{eq:MFG system Cnoise} is the Legendre transform of the Lagrangian $L$ in \eqref{eq:Functional per agent 0}:
\begin{equation}\label{eq:Legendre transform}
	H(x,p)=\sup_{v\in\Rr} \left\{-p v - L(x,v)\right\}, \quad (x,p) \in \Rr^2.
\end{equation}
Here, we assume that 
$v\mapsto L(x,v)$ is uniformly convex for all $x\in \Rr$. Moreover, we assume that $H$ satisfies the assumptions of Theorem \ref{Thm:Main result}, which guarantees the Lagrangian satisfies the convexity requirement. Considering the value function
\begin{align*}
	u(t,x) & = \Ee \left[ \int_t^T \left(L(X(s),v(s)) + \varpi(s) v(s) \right) \d s | W(t)  \right]
	\\
	& \quad + \Ee \left[ u_T\left(X(T)\right) \left. \right| W(t)  \right],
\end{align*}
we obtain the first equation in \eqref{eq:MFG system Cnoise}. The distribution starts with $m_0 \in \Pp(\Rr)$ and evolves according to the stochastic flow controlled by $v^*(t,x)=-H_p(x,\varpi(t)+u_x(t,x))$, as described by the second equation in \eqref{eq:MFG system Cnoise}, while the third equation imposes a market-clearing condition. \cite{IEEEPotential} discusses further details. In our method, we approximate $\varpi$, which allows the decoupling of the equations in \eqref{eq:MFG system Cnoise}.

The particle approximation involves a finite population of $N$ players with independent, identically distributed initial positions $x_0^n \in \Rr$, $n=1,\ldots,N$, with distribution $m_0$. Each player selects $v^n: [0,T] \times \mathbb{R} \times \Omega \to \mathbb{R}$, $1\leq n \leq N$, determining its trajectory $X^n$ according to \eqref{eq:Agent dynamics} and aiming at minimizing the functional mapping $v^n$ to
\begin{align}
\label{eq:Functional per agent}
& \Ee \left[ \int_0^T \big( L(X^n(t),v^n(t)) + \varpi^N(t) \left( v^n(t) - Q(t) \right) \big) \d t \right] \nonumber
\\
& + \Ee \left[ u_T\left(X^n(T)\right) \right].
\end{align}
The existence of ${v^*}^n$ minimizing \eqref{eq:Functional per agent} for $1\leq n \leq N$ corresponds to the existence of $\bv^*=({v^*}^1,\ldots,{v^*}^N)$ minimizing the functional
\begin{gather*}
\bv \mapsto \Ee \left[ \frac{1}{N} \sum_{n=1}^N  \int_0^T L(X^n(t),v^n(t))  \dt + u_T\left(X^n(T)\right) \right]
\end{gather*}
subject to the market-clearing constraint
\begin{equation}
\label{eq:Balance c}
	\frac{1}{N}\sum\limits_{n=1}^N v^n(t)-Q(t)=0.
\end{equation}
We rely on the existence and uniqueness result for the $N$-player price formation model, presented in \cite{gomes2021randomsupply}, which determines the price ${\varpi^*}^N:[0,T] \times \Omega \to\Rr$ in \eqref{eq:Functional per agent} through the Lagrange multiplier associated with the market-clearing constraint. Our goal is to extend the ML algorithm introduced in \cite{MLpriceDet2022} to cover the case of common noise, providing a solution to the price formation problem in random environments. Relying on the particle approximation of the model to approximate the price solving \eqref{eq:MFG system Cnoise}, we approximate stationary points of the functional mapping $(\bv,\varpi^N)$ to
\begin{align}
\label{eq:Saddle points functional}
& \Ee \left[ \frac{1}{N} \sum_{n=1}^N \int_0^T \big(L(X^n(t),v^n(t)) \right. \nonumber
\\
& \quad \quad \quad \left. + \varpi^N(t)\left(v^n(t) - Q(t) \right)\big) dt+ u_T\left(X^n(T)\right) \right]
\end{align}
by minimizing w.r.t. $\bv$ and maximizing w.r.t. $\varpi^N$. The approximation is done in the ML framework, and we guarantee its accuracy using a posteriori estimates of the $N$-player model, which we present next. 
\section{A posteriori estimates}
\label{sec:A posteriori estimates}
In this section, we use optimality conditions for the $N$-player game to obtain a posteriori
  estimates to verify our approximation's convergence. We extend the proof presented in \cite{MLpriceDet2022} to the common noise setting with minor modifications.

The optimality conditions for  \eqref{eq:Functional per agent}  give rise to a 
Hamiltonian system comprising the following backward-forward stochastic differential equation
\begin{equation}
\label{eq:HSystem single agent}
	\begin{cases}
	\d P^n(t) = H_x(X^n(t),P^n(t)+\varpi^N(t)) \d t 
	\\
	\quad \quad \quad \quad + Z^n(t) \d W(t),
	\\
	P^n(T) = u'_T(X^n(T)),
	\\
	\d X^n(t) = -H_p(X^n(t),P^n(t)+\varpi^N(t)) \d t,
	\\
	X^n(0) = x^n_0,
	\end{cases}
\end{equation}
for $1 \leq n \leq N$, where $H$ is given by \eqref{eq:Legendre transform}. Notice that $Z^n$ is part of the unknowns. Moreover, $\bv^*$ and ${\varpi^*}^N$ solving the $N$-player price formation problem define a solution of \eqref{eq:HSystem single agent} by
\begin{equation}
\label{eq:Adjoint relation}
	{P^*}^n(t) + L_v({X^*}^n(t),{v^*}^n(t)) + {\varpi^*}^N(t) =0
\end{equation}
for $1 \leq n \leq N$, defining a saddle point of \eqref{eq:Saddle points functional} that satisfies the market-clearing constraint \eqref{eq:Balance c}. Let $\tilde{P}^n$, $\tilde{Z}^n$, $\tilde{X}^n$, and $\tilde{\varpi}^N$ satisfy
\begin{equation}
\label{eq:HSystem single agent error}
	\begin{cases}
	\d \tilde{P}^n(t) = \left( H_x(\tilde{X}^n(t),\tilde{P}^n(t)+\varpi^N(t)) + \epsilon^n(t)\right) \d t 
	\\
	\quad \quad \quad \quad + \tilde{Z}^n(t) \d W(t),
	\\
	\tilde{P}^n(T) = u'_T(\tilde{X}^n(T)) - \epsilon_T^n,
	\\
	\d \tilde{X}^n(t) = -H_p(\tilde{X}^n(t),\tilde{P}^n(t)+\tilde{\varpi}^N(t)) \d t,
	\\
	\tilde{X}^n(0) = x^n_0,
	\\
	\frac{1}{N}\sum\limits_{n=1}^N -H_p(\tilde{X}^n(t),\tilde{P}^n(t)+\tilde{\varpi}^N(t)) = Q(t) + \epsilon_B(t),
	\end{cases}
\end{equation}
where $\epsilon^n, \epsilon_B: [0,T]\times \Omega \to \Rr$, $\epsilon_T^n:\Omega \to \Rr$, for $1 \leq n \leq N$. We write $\bX = (X^1,\ldots,X^N)$, and, analogously, for all $n$-indexed stochastic processes. We denote $\epsilon_H = (\epsilon^1,\ldots,\epsilon^N,\epsilon_T^1,\ldots,\epsilon_T^N)$ and $\mathds{1} = (1,\ldots,1)\in \Rr^N$.

\begin{pro}\label{lem:Ps bound}
Under the assumptions of Theorem \ref{Thm:Main result}, let $(\bX,\bP,\bZ)$ and $\varpi^N$ solve \eqref{eq:HSystem single agent}, and let $(\tilde{\bX},\tilde{\bP},\tilde{\bZ})$, $\tilde{\varpi}^N$, $\epsilon_H$, and $\epsilon_B$ satisfy \eqref{eq:HSystem single agent error}. Then,
\begin{align*}
	\|P^n - \tilde{P}^n\|^2 & \leq C \left( \|X^n(T) - \tilde{X}^n(T)\|^2 + \|X^n - \tilde{X}^n\|^2 \right. 
	\\
	& \left. \quad \quad \quad + \|\epsilon^n_T\|^2 + \|\epsilon^n\|^2 \right),
\end{align*}
for $1\leq n \leq N$, where $C>0$ depends on problem data.
\end{pro}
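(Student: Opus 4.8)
The plan is to treat this as a standard stability estimate for the backward stochastic differential equation solved by the adjoint variable, with the trajectory difference $X^n-\tilde X^n$ and the defects $\epsilon^n,\epsilon_T^n$ playing the role of data (the genuine coupling between $\delta X^n$ and $\delta P^n$ through the forward equation is resolved elsewhere in the chain of a posteriori estimates, not here). Write $\delta P^n = P^n-\tilde P^n$, $\delta X^n = X^n-\tilde X^n$, $\delta Z^n = Z^n-\tilde Z^n$. Subtracting the backward equations in \eqref{eq:HSystem single agent} and \eqref{eq:HSystem single agent error} — and crucially using that \emph{both} adjoint equations carry the same price $\varpi^N$ in their drift, so that no $\tilde\varpi^N-\varpi^N$ term enters this identity — the pair $(\delta P^n,\delta Z^n)$ solves
\[
\begin{cases}
\d(\delta P^n) = \big( H_x(X^n,P^n+\varpi^N) - H_x(\tilde X^n,\tilde P^n+\varpi^N) - \epsilon^n \big)\dt + \delta Z^n\,\d W,\\
\delta P^n(T) = u_T'(X^n(T)) - u_T'(\tilde X^n(T)) + \epsilon_T^n .
\end{cases}
\]
Since $H$ is separable, $H_x$ depends on $x$ only, so Lipschitz continuity of $H_x$ bounds the drift difference by $L\,|\delta X^n|$, and Lipschitz continuity of $u_T'$ bounds $|\delta P^n(T)|$ by $L\,|\delta X^n(T)| + |\epsilon_T^n|$. (The argument goes through verbatim without separability, the only change being an extra $L\,|\delta P^n|$ term in the drift, absorbed by the Gronwall step below.)

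Next I would apply It\^o's formula to $s\mapsto|\delta P^n(s)|^2$ on $[t,T]$, take expectations, and use that the stochastic integral $\int_t^T \delta P^n\,\delta Z^n\,\d W$ has zero mean, to get, for every $t\in[0,T]$,
\[
\Ee\big[|\delta P^n(t)|^2\big] + \Ee\!\int_t^T |\delta Z^n|^2\ds = \Ee\big[|\delta P^n(T)|^2\big] - 2\,\Ee\!\int_t^T \delta P^n\big(H_x(X^n,\cdot) - H_x(\tilde X^n,\cdot) - \epsilon^n\big)\ds .
\]
Then I would estimate the terminal term by $2L^2\,\Ee[|\delta X^n(T)|^2] + 2\,\Ee[|\epsilon_T^n|^2]$ and, by Young's inequality, the drift integrand by $|\delta P^n|^2 + C|\delta X^n|^2 + C|\epsilon^n|^2$; discarding the nonnegative $\delta Z^n$ term leaves a backward Gronwall inequality for $t\mapsto\Ee[|\delta P^n(t)|^2]$ whose inhomogeneous part is $C\big(\Ee[|\delta X^n(T)|^2] + \Ee[|\epsilon_T^n|^2] + \|\delta X^n\|^2 + \|\epsilon^n\|^2\big)$. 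Gronwall's lemma then bounds $\Ee[|\delta P^n(t)|^2]$ by $C$ times that quantity, uniformly in $t$; integrating in $t$ over $[0,T]$ turns the left-hand side into $\|P^n-\tilde P^n\|^2$ and absorbs the factors of $T$ into $C$, which is the claim.

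The hard part is not conceptual but technical: justifying that the It\^o correction and the martingale term are legitimate, i.e.\ that $\delta P^n$ and $\delta Z^n$ have enough integrability ($\Ee[\sup_{[0,T]}|\delta P^n|^2]<\infty$, $\Ee\int_0^T|\delta Z^n|^2<\infty$) for the expectation of the stochastic integral to vanish. This follows from the standard well-posedness and a priori bounds for BSDEs with Lipschitz data — exactly the Lipschitz hypotheses placed on $H_x$ and $u_T'$ — together with a localizing sequence of stopping times and dominated convergence; the rest is a routine It\^o–Young–Gronwall computation. It is worth noting that this particular estimate uses only the Lipschitz bounds: the uniform concavity–convexity of $H$ and of $u_T$ is needed elsewhere (to control $\delta X^n$ and $\tilde\varpi^N-\varpi^N$), not in this proposition.
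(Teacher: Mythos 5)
Your argument is correct and reaches the stated bound, but it takes a slightly different technical route than the paper. The paper's proof is a one-line representation argument: integrate the backward equations on $[t,T]$, use the conditional-expectation representation of $P^n(t)-\tilde P^n(t)$ together with the martingale property of the $Z$-integrals, then square and apply the Lipschitz bounds on $H_x$ and $u_T'$ and Cauchy--Schwarz; because $H$ is separable, $H_x$ carries no $\delta P^n$ feedback, so no Gronwall step is needed and the inequality is obtained pointwise in $t$ before integrating over $[0,T]$. You instead run the standard BSDE energy estimate (It\^o's formula on $|\delta P^n|^2$, Young's inequality, backward Gronwall). Both rest on exactly the same hypotheses --- Lipschitz continuity of $H_x$ and $u_T'$, separability, and the vanishing of the stochastic integrals in expectation --- and your observation that the convexity assumptions play no role here is accurate. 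What your version buys is a bound on $\Ee\int_0^T|\delta Z^n|^2\,\d s$ (not needed for the claim) and robustness to nonseparable Hamiltonians, at the cost of the Gronwall constant $e^{CT}$; the paper's version is shorter but relies on separability to avoid the feedback term. Your closing remarks on localization and integrability are the standard justification and are consistent with what the paper leaves implicit.
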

\begin{proof}
Integrating on $[t,T]$ the first equations in \eqref{eq:HSystem single agent} and \eqref{eq:HSystem single agent error}, taking expectations, and using the martingale property of the processes $Z^n$ and $\tilde{Z}^n$, we have the estimate
\begin{align*}
	\Ee \left[ (P^n - \tilde{P}^n)^2 \right] & \leq C \Ee \left[ (X^n(T) - \tilde{X}^n(T))^2 \right.
	\\
	& \left. \quad + (X^n - \tilde{X}^n)^2 + (\epsilon^n_T)^2 + (\epsilon^n)^2 \right]
\end{align*}
for $1\leq n \leq N$. Integrating the previous inequality over $[0,T]$ we obtain the result. 
\end{proof}

\begin{pro}\label{Prop:PricevsELError} 
Under the assumptions of Theorem \ref{Thm:Main result}, let $(\bX,\bP,\bZ)$ and $\varpi^N$ solve \eqref{eq:HSystem single agent}, and let $(\tilde{\bX},\tilde{\bP},\tilde{\bZ})$, $\tilde{\varpi}^N$, $\epsilon_H$, and $\epsilon_B$ satisfy \eqref{eq:HSystem single agent error}. Then,
\begin{align*}
	& \|\bP + \mathds{1}\varpi^N - (\tilde{\bP} + \mathds{1} \tilde{\varpi}^N)\|^2 + \| \bX - \tilde{\bX} \|^2  
	\\
	& \leq C \left( \|\epsilon_H\|^2 + +\|\epsilon_B \|^2 \right),
\end{align*}
where $C>0$ depends on problem data.
\end{pro}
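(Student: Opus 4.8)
The plan is to extend the duality argument of \cite{MLpriceDet2022} to the common-noise system, using the separable concave--convex structure of $H$ to generate coercive terms and the two market-clearing conditions to cancel the price mismatch. Throughout, write $\bar{X}^n = X^n - \tilde{X}^n$, $\bar{P}^n = P^n - \tilde{P}^n$, $\bar{\varpi} = \varpi^N - \tilde{\varpi}^N$, and set $Y^n = P^n + \varpi^N$, $\tilde{Y}^n = \tilde{P}^n + \tilde{\varpi}^N$, so that $Y^n - \tilde{Y}^n = \bar{P}^n + \bar{\varpi}$ and, up to the $N$-normalization of $\|\cdot\|$, $\|\bX - \tilde{\bX}\|^2 = \sum_n \|\bar{X}^n\|^2$ and $\|\bP + \mathds{1}\varpi^N - (\tilde{\bP} + \mathds{1}\tilde{\varpi}^N)\|^2 = \sum_n \|Y^n - \tilde{Y}^n\|^2$. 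By separability we may write $H(x,p) = h(p) - V(x)$, and uniform concave--convexity gives $\lambda,\mu>0$ with $V$ $\lambda$-convex and $h$ $\mu$-convex; in particular $H_x = -V'$ depends only on $x$ and $H_p = h'$ only on $p$, both Lipschitz.

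First I would derive an energy identity by applying It\^o's formula to $\bar{X}^n \bar{P}^n$ using \eqref{eq:HSystem single agent} and \eqref{eq:HSystem single agent error}. Since $\bar{X}^n$ has no martingale part there is no cross-variation term, and after integrating over $[0,T]$, taking expectations, using $\bar{X}^n(0)=0$ and the martingale property of $\int_0^{\cdot}\bar{X}^n(Z^n-\tilde{Z}^n)\,\d W$ (valid under the assumed integrability), one gets
\begin{align*}
\Ee\!\left[\bar{X}^n(T)\bar{P}^n(T)\right]
&= \Ee\int_0^T \Big[ -\bar{X}^n\big(V'(X^n)-V'(\tilde{X}^n)\big) - \bar{X}^n\epsilon^n \\
&\qquad\quad - \bar{P}^n\big(h'(Y^n)-h'(\tilde{Y}^n)\big)\Big]\d t .
\end{align*}
For the terminal term, $\bar{P}^n(T) = u_T'(X^n(T)) - u_T'(\tilde{X}^n(T)) + \epsilon_T^n$, and convexity of $u_T$ forces $\bar{X}^n(T)\big(u_T'(X^n(T))-u_T'(\tilde{X}^n(T))\big)\geq 0$, hence $-\Ee[\bar{X}^n(T)\bar{P}^n(T)] \leq \Ee[\,|\bar{X}^n(T)|\,|\epsilon_T^n|\,]$. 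For the integrand, $\lambda$-convexity of $V$ gives $-\bar{X}^n(V'(X^n)-V'(\tilde{X}^n)) \leq -\lambda(\bar{X}^n)^2$, and writing $\bar{P}^n = (Y^n-\tilde{Y}^n)-\bar{\varpi}$ together with $\mu$-convexity of $h$ gives
\[
-\bar{P}^n\big(h'(Y^n)-h'(\tilde{Y}^n)\big) \leq -\mu(Y^n-\tilde{Y}^n)^2 + \bar{\varpi}\big(h'(Y^n)-h'(\tilde{Y}^n)\big).
\]
Summing over $n$, the market-clearing condition satisfied by the exact solution (equation \eqref{eq:Balance c}, i.e.\ $\tfrac1N\sum_n h'(Y^n) = -Q$) together with the last line of \eqref{eq:HSystem single agent error} ($\tfrac1N\sum_n h'(\tilde{Y}^n) = -Q-\epsilon_B$) collapse the price contribution to $N\,\Ee\int_0^T \bar{\varpi}\,\epsilon_B\,\d t$, yielding
\begin{align*}
\lambda\sum_n\|\bar{X}^n\|^2 + \mu\sum_n\|Y^n-\tilde{Y}^n\|^2
&\leq \sum_n\Ee[\,|\bar{X}^n(T)|\,|\epsilon_T^n|\,] + \sum_n\Ee\!\int_0^T|\bar{X}^n|\,|\epsilon^n|\,\d t \\
&\quad + N\,\Ee\!\int_0^T\bar{\varpi}\,\epsilon_B\,\d t .
\end{align*}

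Next I would bound the right-hand side. From the forward equation $\d\bar{X}^n = -(h'(Y^n)-h'(\tilde{Y}^n))\,\d t$ with $\bar{X}^n(0)=0$ and the Lipschitz bound on $h'$, one gets $\|\bar{X}^n(T)\|^2 \leq C\|Y^n-\tilde{Y}^n\|^2$ and $\|\bar{X}^n\|^2 \leq C\|Y^n-\tilde{Y}^n\|^2$. The first two sums on the right are handled by Cauchy--Schwarz and Young's inequality with small weights, producing terms $\tfrac{\lambda}{4}\sum_n\|\bar{X}^n\|^2 + \tfrac{\mu}{8}\sum_n\|Y^n-\tilde{Y}^n\|^2 + C\sum_n(\|\epsilon_T^n\|^2+\|\epsilon^n\|^2)$. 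For the price term I would use $N\bar{\varpi} = \sum_n\big((Y^n-\tilde{Y}^n)-\bar{P}^n\big)$ to split
\[
N\,\Ee\!\int_0^T\bar{\varpi}\,\epsilon_B\,\d t = \sum_n\Ee\!\int_0^T(Y^n-\tilde{Y}^n)\,\epsilon_B\,\d t - \sum_n\Ee\!\int_0^T\bar{P}^n\,\epsilon_B\,\d t ,
\]
bound the first sum by Young, and for the second invoke Proposition \ref{lem:Ps bound} together with $\|\bar{X}^n(T)\|^2 \leq C\|Y^n-\tilde{Y}^n\|^2$ to get $\|\bar{P}^n\| \leq C\big(\|Y^n-\tilde{Y}^n\| + \|\bar{X}^n\| + \|\epsilon_T^n\| + \|\epsilon^n\|\big)$, then apply Young once more. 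Choosing the Young weights so that the accumulated coefficients of $\sum_n\|\bar{X}^n\|^2$ and $\sum_n\|Y^n-\tilde{Y}^n\|^2$ stay strictly below $\lambda$ and $\mu$, these terms are absorbed into the left-hand side, leaving $\sum_n\|\bar{X}^n\|^2 + \sum_n\|Y^n-\tilde{Y}^n\|^2 \leq C\big(\sum_n\|\epsilon_T^n\|^2 + \sum_n\|\epsilon^n\|^2 + \|\epsilon_B\|^2\big)$. Since $\epsilon_H = (\epsilon^1,\dots,\epsilon^N,\epsilon_T^1,\dots,\epsilon_T^N)$, the right-hand side is $C(\|\epsilon_H\|^2+\|\epsilon_B\|^2)$ and the left-hand side is $\|\bX-\tilde{\bX}\|^2 + \|\bP+\mathds{1}\varpi^N-(\tilde{\bP}+\mathds{1}\tilde{\varpi}^N)\|^2$, which is the claim.

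I expect the main obstacle to be the price-coupling term $N\,\Ee\int_0^T\bar{\varpi}\,\epsilon_B\,\d t$. Because $\bar{\varpi}$ is the difference of Lagrange multipliers, it is not controlled a priori; the two ingredients that make it manageable are, first, the pair of market-clearing constraints, which is exactly what converts the uncontrolled quantity $\sum_n\bar{\varpi}\,(h'(Y^n)-h'(\tilde{Y}^n))$ into the benign $N\bar{\varpi}\,\epsilon_B$, and second, Proposition \ref{lem:Ps bound}, which lets one trade $\|\bar{P}^n\|$ for $\|\bar{X}^n\|$ and the data errors. The remaining care is purely bookkeeping: tracking the Young weights and the $1/N$ normalization so that the quadratic terms are genuinely absorbed.
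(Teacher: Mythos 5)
Your proposal is correct and follows essentially the same route as the paper: an It\^o product-rule (duality) identity on $(X^n-\tilde X^n)(P^n-\tilde P^n)$, monotonicity from the uniform concavity--convexity of $H$ and convexity of $u_T$, the two market-clearing conditions collapsing the price coupling to $N\bar\varpi\,\epsilon_B$, Proposition \ref{lem:Ps bound} to trade $\|\bP-\tilde\bP\|$ for state differences and data errors, and Young-weight absorption. The only cosmetic difference is that you control $\|\bX(T)-\tilde\bX(T)\|$ via the forward dynamics and the Lipschitz bound on $H_p$ inside this proof, whereas the paper uses the (uniform) convexity of $u_T$ here and defers that bound to the proof of Theorem \ref{Thm:Main result}.
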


\begin{proof}
We write $\|\cdot \|_2 = \|\cdot \|_{L^2([0,T])}$. The uniform concavity-convexity assumption on $H$ and the equations in \eqref{eq:HSystem single agent} and \eqref{eq:HSystem single agent error} give
\begin{align*}
	& \tfrac{\gamma_p}{2}\| P^n + \varpi^N - (\tilde{P}^n +\tilde{\varpi}^N) \|_2^2 + \tfrac{\gamma_x}{2}\|X^n - \tilde{X}^n\|_2^2 
	\\
	& \leq \int_0^T \bigg( \d\left( \tilde{X}^n - X^n \right) ( P^n - \tilde{P}^n) 
	\\
	& \quad\quad\quad\quad - \d \left( X^n - \tilde{X}^n \right) ( \varpi^N -\tilde{\varpi}^N)
	\\
	& \quad\quad\quad\quad -\left( \d (P^n - \tilde{P}^n) + \epsilon^n \d t \right) ( X^n - \tilde{X}^n) 
	\\
	& \quad\quad\quad\quad + (Z^n - \tilde{Z}^n) \d W(t) \bigg)
\end{align*}	
for $1\leq n \leq N$, for some $\gamma_p,\gamma_x>0$. Using It\^{o} product rule, the initial and terminal conditions in \eqref{eq:HSystem single agent} and \eqref{eq:HSystem single agent error}, and the convexity of $u_T$, the previous inequality gives
\begin{align*}
	& \tfrac{\gamma_p}{2}\| P^n + \varpi^N - (\tilde{P}^n +\tilde{\varpi}^N) \|_2^2 + \tfrac{\gamma_x}{2}\|X^n - \tilde{X}^n\|_2^2 
	\\
	& \leq \left( -\tfrac{\gamma_T}{2} + \tfrac{1}{4 \delta_1} \right) ( X^n(T) - \tilde{X}^n(T))^2 + \delta_1 (\epsilon_T^n)^2 
	\\
	& \quad - \int_0^T \bigg(  \d \left( X^n - \tilde{X}^n \right) ( \varpi^N -\tilde{\varpi}^N) + \epsilon^n( X^n - \tilde{X}^n) \d t 
	\\
	& \quad + (Z^n - \tilde{Z}^n) \d W(t) \bigg)
\end{align*}
for some $\gamma_T>0$ and $\delta_1>0$ to be selected. Adding the previous inequality over $n$, and using the third equation in \eqref{eq:HSystem single agent} and \eqref{eq:HSystem single agent error}, we get
\begin{align}
\label{eq:Aux 1}
	& \tfrac{\gamma_p}{2}\| \bP + \mathds{1}\varpi^N - (\tilde{\bP} +\mathds{1} \tilde{\varpi}^N) \|_2^2 + \tfrac{\gamma_x}{2}\|\bX - \tilde{\bX}\|_2^2  \nonumber
	\\
	& \leq \left( -\tfrac{\gamma_T}{2} + \tfrac{1}{4 \delta_1} \right) |\bX(T) - \tilde{\bX}(T)|^2 + \delta_1 |\epsilon_T|^2 \nonumber
	\\
	& \quad + \int_0^T \bigg( N \epsilon_B ( \varpi^N -\tilde{\varpi}^N) \d t	+ (\bZ - \tilde{\bZ}) \d W(t) \bigg) \nonumber
	\\
	& \quad + \delta_2 \|\bepsilon\|_2^2 + \tfrac{1}{4\delta_2}\|\bX-\tilde{\bX}\|_2^2 
\end{align}
for some $\delta_2>0$ to be selected. By the triangle inequality,
\begin{equation}
\label{eq:Triangle 1}
	\frac{N}{2}\| \varpi^N - \tilde{\varpi}^N \|^2  \leq \|\bP + \mathds{1} \varpi^N - (\tilde{\bP} + \mathds{1}\tilde{\varpi}^N) \|^2 + \|\bP - \tilde{\bP}\|^2.
\end{equation}
Using \eqref{eq:Triangle 1} on the RHS of \eqref{eq:Aux 1}, taking expectations, and using Proposition \ref{lem:Ps bound}, we obtain
\begin{align*}
	& \tfrac{\gamma_p}{2}\| \bP + \mathds{1}\varpi^N - (\tilde{\bP} +\mathds{1} \tilde{\varpi}^N) \|^2 + \tfrac{\gamma_x}{2}\|\bX - \tilde{\bX}\|^2  \nonumber
	\\
	& \leq \left( -\tfrac{\gamma_T}{2} + \tfrac{1}{4 \delta_1} + \tfrac{C}{\delta_4}\right) \|\bX(T) - \tilde{\bX}(T)\|^2 
	\\
	& \quad + \left( \delta_1 +\tfrac{C}{\delta_4}\right)\|\epsilon_T\|^2 + \left( N \delta_3 +N \delta_4\right) \|\epsilon_B\|^2 \nonumber
	\\
	& \quad + \tfrac{1}{4 \delta_3} \|\bP + \mathds{1}\varpi^N -(\tilde{P} + \mathds{1}\tilde{\varpi}^N)\|^2 
	\\
	& \quad +\left( \delta_2 + \tfrac{C}{\delta_4} \right) \|\bepsilon\|^2 + \left( \tfrac{1}{4\delta_2} + \tfrac{C}{\delta_4}\right)\|\bX-\tilde{\bX}\|^2
\end{align*}
for some $\delta_3,\delta_4>0$ to be selected. Selecting $\delta_i$, $i=1,\ldots,4$ conveniently, the previous expression provides the result. 
\end{proof}
\begin{proof}[Proof of Theorem \ref{Thm:Main result}]
Using \eqref{eq:Triangle 1}, Proposition \ref{lem:Ps bound} and Proposition \ref{Prop:PricevsELError}, 
we get
\begin{align}
\label{eq:Triangle 2}
	& \frac{N}{2}\| \varpi^N - \tilde{\varpi}^N \|^2  \nonumber
	\\
	& \leq C\left( \|\epsilon_H\|^2 + \|\epsilon_B\|^2 + \|\bX(T) - \tilde{\bX}(T)\|^2 \right).
\end{align}
The Lipstichz continuity of $H_p$ and Proposition \ref{lem:Ps bound} give
\begin{align}
\label{eq:Aux T}
\|\bX(T) - \tilde{\bX}(T)\|^2 \leq C \left( \|\epsilon_H\|^2 + \|\epsilon_B\|^2 \right).
\end{align}
Using \eqref{eq:Aux T} in \eqref{eq:Triangle 2}, we obtain the result.
\end{proof}

\section{Neural Networks for progressively measurable processes}
\label{sec:NN for price}

This section details the RNN architectures we use to estimate $v^*$ and $\varpi$. Section \ref{sec: Numerical results} presents some  numerical experiments. 
RNNs, commonly used in natural language processing, generate outputs that sequentially depend on inputs. This architecture has a cell that iterates through input sequences and has a hidden state tracking historical dependencies; see
\cite{HighamML} for details. RNNs have also been used in the context of control problems with delay in~\cite{han2021recurrent}, but here our motivation comes from the impact of the common noise on the mean-field term. 

In our architecture, the RNN takes as inputs an ordered sequence, such as a discrete realization of the supply $\mathrm{Q}=\left(\mathrm{Q}^{\langle 0 \rangle},\ldots,\mathrm{Q}^{\langle K \rangle }\right)$. 
The RNN features a hidden state $\mathrm{h}$, initialized as zero, that captures the temporal dependence. Inside the RNN, a weight matrix $\mathrm{W}^{[l]}$ and a bias vector $\mathrm{b}^{[l]}$ determine layer $l$, where $1\leq l \leq L$. 
Their dimensions depend on the number of neurons $n^{[l]}$ per layer. 
The activation function of layer $j$ is denoted by $\sigma^{[l]}$. The cell parameters (weight matrices and bias vectors) are denoted by $\Theta$.

We use two RNNs for approximating the control variable $v$ and the price $\varpi$.
As usual in the ML framework, a trade-off must be made between computational cost and accuracy. Deep-RNN employs several layers and neurons in their cell. After multiple numerical experiments, we select $L=5$ layers, with $n^{[1]}=16$, $n^{[2]},n^{[3]},n^{[4]}=32$, and $n^{[5]}=1$ for the RNN approximating $\mathrm{v}^{\langle k\rangle }$, and $n^{[1]}, n^{[2]},n^{[3]},n^{[4]}=16$, and $n^{[5]}=1$ for the RNN approximating $\varpi$. As a common practice for RNN, the activation functions are hyperbolic tangent for the first layer, which computes the hidden state, and sigmoid for layers two to four. The last layer has an activation function equal to the identity, representing any real number as an output. Although we do not address it, an interesting research question is how sensitive the results are to the choice of parameters of the RNN. Moreover, a comparison regarding the accuracy and computational efficiency against other methods, such as forward-backward SDEs methods, can be formulated based on the adaptability of those methods to the price formation MFG problem with common noise.

We denote by $\Delta t =T/K$ the time-step size and discretize \eqref{eq:Agent dynamics} according to 
\begin{equation}\label{eq:NN Forward dynamics}
	X^{\langle k+1 \rangle}=X^{\langle k \rangle}+  \mathrm{v}^{\langle k \rangle}(\Theta_v) \Delta t, \quad X^{\langle 0 \rangle}=x_0
\end{equation}
for $k=0,\ldots,K$, where $\Theta_v$ is the parameter of the RNN approximating $v$. The second RNN, with parameter $\Theta_\varpi$, computes $\varpi^{\langle k \rangle}$ for $k=0,\ldots,K$. More precisely, the inputs and outputs of the two RNNs are as follows. For the RNN computing $\varpi(\Theta_\varpi)$, the input consists of a supply realization and the time; that is, $\left((Q^{\langle k \rangle})_{k =0}^{K}, \quad (t^{\langle k \rangle})_{k =0}^{K}\right)$. The output is $(\varpi^{\langle k \rangle})_{k =0}^{K}$. For the RNN computing $v(\Theta_v)$, the input consists of the time, the state variables (which the RNN updates according to \eqref{eq:NN Forward dynamics} as it iterates in the temporal direction), and the current price approximation; that is, $\left( (t^{\langle k \rangle})_{k =0}^{K}, (X^{\langle k \rangle})_{k =0}^{K}, (\varpi^{\langle k \rangle})_{k =0}^{K}, \right)$. The output is $(\mathrm{v}^{\langle k \rangle})_{k =0}^{K}$. Because we consider a population of $N$ agents, we add the superscript $(n)$ to denote the position and control sequence of the agent being considered; that is, $(X^{(n)\langle k \rangle})_{k =0}^{K}$, and $(\mathrm{v}^{(n)\langle k \rangle})_{k =0}^{K}$, for $1 \leq n \leq N$. 

\subsection{Numerical implementation of a posteriori estimates}
Let $\Delta \tilde{P}^{\langle k \rangle} = \tilde{P}^{\langle k+1 \rangle} - \tilde{P}^{\langle k \rangle}$ for $k=0,\ldots,K-1$. At the discrete level, \eqref{eq:HSystem single agent error} is equivalent to
\begin{equation*}
	\begin{cases}
	\Delta \tilde{P}^{(n)\langle k \rangle} 
	\\
	= \left( H_x(\tilde{X}^{(n)\langle k \rangle},\tilde{P}^{(n)\langle k \rangle}+\tilde{\varpi}^{N \langle k \rangle}) + \epsilon^{(n)\langle k \rangle}\right) \Delta t 
	\\
	\quad + Z^{(n)\langle k \rangle} \Delta W^{\langle k \rangle},
	\\
	\tilde{P}^{(n)\langle K \rangle} = u'_T(\tilde{X}^{(n)\langle K \rangle}) - \epsilon_T^{(n)},
	\\
	\Delta \tilde{X}^{(n)\langle k \rangle} = -H_p(\tilde{X}^{(n)\langle k \rangle},\tilde{P}^{(n)\langle k \rangle}+\tilde{\varpi}^{N \langle k \rangle}) \Delta t,
	\\
	\tilde{X}^{(n)\langle 0 \rangle} = x^n_0,	
	\\
	\frac{1}{N}\sum\limits_{n=1}^N -H_p(\tilde{X}^{(n)\langle k \rangle},\tilde{P}^{(n)\langle k \rangle}+\tilde{\varpi}^{N \langle k \rangle}) = Q^{\langle k \rangle} + \epsilon_B^{\langle k \rangle}
	\end{cases}
\end{equation*}
for $1 \leq n \leq N$. By \eqref{eq:Legendre transform}, $H_x(x,p) = -L_x(x,v)$ at the point $v$ where the supremum is achieved. Therefore, taking expectations on both sides of the equation in the previous system, and using the martingale property for the processes $\tilde{Z}^n$, for $1 \leq n \leq N$, we get
\begin{equation}
\label{eq:HSystem E Delta2}
	\begin{cases}
	\Ee\left[\Delta \tilde{P}^{(n)\langle k \rangle}\right] 
	\\
	= \Ee\left[ -L_x(\tilde{X}^{(n)\langle k \rangle},\tilde{v}^{(n)\langle k \rangle}) +  \epsilon^{(n)\langle k \rangle}\right] \Delta t 
	\\
	\Ee\left[ \tilde{P}^{(n)\langle K \rangle} \right]= \Ee\left[ u'_T(\tilde{X}^{(n)\langle K \rangle}) - \epsilon_T^{(n)} \right],		
	\\
	\Ee\left[\Delta \tilde{X}^{(n)\langle k \rangle}\right] = \Ee\left[\tilde{v}^{(n)\langle k \rangle}\right] \Delta t,
	\\
	\tilde{X}^{(n)\langle 0 \rangle} = x^n_0,			
	\\
	\Ee \left[ \frac{1}{N}\sum\limits_{n=1}^N \tilde{v}^{(n)\langle k \rangle} \right] = \Ee \left[ Q^{\langle k \rangle} + \epsilon_B^{\langle k \rangle}\right],
	\end{cases}
\end{equation}
where $\tilde{v}^{(n)\langle k \rangle} = -H_p(\tilde{X}^{(n)\langle k \rangle},\tilde{P}^{(n)\langle k \rangle}+\tilde{\varpi}^{N \langle k \rangle})$ drives the process $\tilde{X}^{(n)\langle k \rangle}$ according to \eqref{eq:NN Forward dynamics}. While the initial condition $\tilde{X}^{(n)\langle 0 \rangle}$ is deterministic, the terminal condition $\tilde{P}^{(n)\langle K \rangle}$ is random. We take a Monte Carlo (MC) approximation of \eqref{eq:HSystem E Delta2} with $J$ realizations; that is,
\begin{equation*}
	\begin{cases}
	\frac{1}{J}\sum\limits_{j=1}^J \Delta \tilde{P}^{(n)\langle k \rangle}_j 
	\\
	= \frac{\Delta t}{J}\sum\limits_{j=1}^J \left( -L_x(\tilde{X}^{(n)\langle k \rangle}_j,\tilde{v}^{(n)\langle k \rangle}_j) + \epsilon^{(n)\langle k \rangle}_j\right) 
	\\
	\frac{1}{J}\sum\limits_{k=1}^J \tilde{P}_j^{(n)\langle K \rangle} = \frac{1}{J}\sum\limits_{k=1}^J \left( u'_T(\tilde{X}_j^{(n)\langle K \rangle}) - {\epsilon_T}_j^{(n)} \right),			
	\\
	\frac{1}{J}\sum\limits_{k=1}^J \Delta \tilde{X}^{(n)\langle k \rangle}_j =  \frac{\Delta t}{J}\sum\limits_{j=1}^J \tilde{v}^{(n)\langle k \rangle}_j,
	\\
	\tilde{X}_j^{(n)\langle 0 \rangle} = x^n_0,			
	\\
	\frac{1}{J N}\sum\limits_{k=1}^J \sum\limits_{n=1}^N\tilde{v}^{(n)\langle k \rangle}_j = \frac{1}{J}\sum\limits_{j=1}^J \left( Q^{\langle k \rangle}_j + {\epsilon_B}_j^{\langle k \rangle} \right).
	\end{cases}
\end{equation*}
Thus, to implement the a posteriori estimate of Theorem \ref{Thm:Main result} numerically, let  $\tilde{v}^{(n)\langle k \rangle}_j$ and $\tilde{\varpi}^{N \langle k \rangle}$ be given. Define $\tilde{X}^{(n)\langle k \rangle}_j$ and $\tilde{P}^{(n)\langle k \rangle}_j$ according to \eqref{eq:NN Forward dynamics} and \eqref{eq:Adjoint relation}, respectively, and compute the mean-square error (MSE) of $\epsilon_H$ and $\epsilon_B$ by
\begin{align}
\label{eq:MSE HSystem and Balance}
	& \mbox{MSE}(\epsilon_H) \nonumber
	\\
	& =  \tfrac{1}{J N K}\sum_{j=1}^J\sum_{k=0}^K\sum_{n=1}^N \bigg(\bigg( \Delta \tilde{P}^{(n)\langle k \rangle}_j(t) \nonumber
	\\
	& \quad\quad\quad\quad\quad\quad\quad\quad\quad\quad  + \Delta t L_x(\tilde{X}^{(n)\langle k \rangle}_j(t),v^{(n)\langle k \rangle}_j(t))  \bigg)^2 \nonumber
	\\
	& \quad\quad\quad\quad\quad\quad\quad\quad\quad\quad  +\left( u'_T(\tilde{X}_j^{(n)\langle K \rangle}) - \tilde{P}_j^{(n)\langle K \rangle}	 \right)^2 \bigg), \nonumber
	\\ 
	& \mbox{MSE}(\epsilon_B) =  \tfrac{1}{J K}\sum_{j=1}^J\sum_{k=0}^K \left( \frac{1}{N}\sum_{n=1}^N v^{(n)\langle k \rangle}_j(t)  - Q^{\langle k \rangle}_j(t) \right)^2.
\end{align}

We measure \eqref{eq:MSE HSystem and Balance} as we train the neural network with the algorithm we introduce next. 

\begin{remark}
Theorem \ref{Thm:Main result} addresses the convergence for finite populations. A complete analysis of the convergence in our method involves three steps we identify by writing
\begin{align*}
\| \varpi - \tilde{\varpi}^N_{\mbox{\tiny ML}} \|  \leq \| \varpi - \varpi^N\| + \|\varpi^N - \varpi^N_{\mbox{\tiny MC}}\| + \| \varpi^N_{\mbox{\tiny MC}} - \tilde{\varpi}^N_{\mbox{\tiny ML}} \|.
\end{align*}
First is the convergence of finite to continuum population games. Second, the convergence of the MC approximation to the finite population game, addressing the dependence of sample size w.r.t. the population size. Third is the convergence of the ML to the MC approximation, involving the RNN parameters in the estimates. This is the error that our a posteriori estimate controls. 
\end{remark}

\subsection{Training algorithm}

In typical ML frameworks, a class of neural networks is trained by minimizing a loss function $\mathcal{L}$. Within a fixed architecture, $\mathcal{L}$ assigns a real number $\mathcal{L}(\Theta)$ to a parameter $\Theta$. The objective is to minimize $\mathcal{L}$ across the parameters $\Theta$. For a given realization $Q_i$ of the supply, the loss function is
\begin{align}\label{eq:Loss adversarial}
& \mathcal{L}\left( \Theta_{v},\Theta_{\varpi}\right) \nonumber
\\
& = \frac{1}{N} \sum_{n=1}^N \Bigg( \sum_{k=0}^{K-1}   \Delta_t \Big( L(X^{(n)\langle k \rangle},\mathrm{v}^{(n)\langle k \rangle}(\Theta_{v}))  \nonumber
\\
& \quad\quad\quad + \varpi^{\langle k \rangle}(\Theta_{\varpi})\left( \mathrm{v}^{(n)\langle k \rangle}(\Theta_{v}) - Q^{\langle k \rangle}_i\right)\Big) \nonumber
\\
& \quad\quad\quad + u_T(X^{(n)\langle M \rangle})\Bigg) .
\end{align}
 
The training algorithm is the following. 

\begin{algorithm}
\begin{tiny}
	\caption{Training algorithm}
    \SetKwInOut{Input}{Input}
    \SetKwInOut{Output}{Output}
    \Input{number of training iterations $I$, epoch size $I_e$, number of time steps $K$, training sample size $N_{\mbox{\tiny train}}$, test sample size $N_{\mbox{\tiny test}}$, MC sample size $J$, initial density $m_0$.}
    Initialize $\Theta_v^1,\Theta_\varpi^1$\; 
	\For{$i=1,\ldots,I$}{
	sample $(x_0^n)_{n=1}^{N_{\mbox{\tiny train}}}$ according to $m_0$\;
	sample $(Q^{\langle k \rangle}_i)_{k =0}^{K}$ according to \eqref{eq:SDE supply}\;
		compute $(\varpi^{\langle k \rangle}(\Theta_\varpi^i))_{k=0}^K$ and $((\mathrm{v}^{(n)\langle k \rangle}(\Theta_v^i))_{n=1}^{N_{\mbox{\tiny train}}})_{k=0}^K$\;
		compute $\mathcal{L}(\Theta_v^i,\Theta_\varpi^i)$ according to \eqref{eq:Loss adversarial}\;
	compute $\Theta_v^{i+1}$ by updating $\Theta_v^{i}$ in the descent direction $\mathcal{L}_{\Theta_v}(\Theta_v^i,\Theta_\varpi^i)$\;
	compute $((\mathrm{v}^{(n)\langle k \rangle}(\Theta_v^{i+1}))_{n=1}^{N_{\mbox{\tiny train}}})_{k=0}^K$\;
	compute $\mathcal{L}(\Theta_v^{i+1},\Theta_\varpi^i)$ according to \eqref{eq:Loss adversarial}\;		
	compute $\Theta_\varpi^{i+1}$ by updating $\Theta_\varpi^{i}$ in the ascent direction $\mathcal{L}_{\Theta_\varpi}(\Theta_v^{i+1},\Theta_\varpi^i)$\;
	\If{$i \mod I_e =0 $ (epoch is completed)}{
	sample $(x_0^n)_{n=1}^{N_{\mbox{\tiny test}}}$ according to $m_0$\;
	sample $((Q^{\langle k \rangle}_j)_{k =0}^{K})_{j=1}^{J}$ according to \eqref{eq:SDE supply}\;
	compute $(\varpi^{\langle k \rangle}(\Theta_\varpi^{i+1}))_{k=0}^K$ and $((\mathrm{v}^{(n)\langle k \rangle}(\Theta_v^{i+1}))_{n=1}^{N_{\mbox{\tiny test}}})_{k=0}^K$\;
	compute $\mbox{MSE}(\epsilon_H)$ and $\mbox{MSE}(\epsilon_B)$ according to \eqref{eq:MSE HSystem and Balance}.
	
	}
	}
    \Output{$\Theta_\varpi^I$, $\Theta_v^I$}
\label{alg:our algorithm}
\end{tiny}
\end{algorithm}

In contrast to the training algorithm in \cite{MLpriceDet2022}, in Algorithm~\ref{alg:our algorithm}, the supply input changes between training steps. The algorithm trains two neural networks in an adversarial manner. At each step, we generate a sample of the probability distribution $m_0$. To minimize the agent's cost function, we update $\Theta_v$ in the direction of descent while $\Theta_\varpi$ is fixed. Conversely, to penalize deviation from the balance condition, we maximize the cost functional by updating $\Theta_\varpi$ in the direction of ascent while $\Theta_v$ is fixed. This process is repeated multiple times, approximating the saddle point corresponding to the control minimizing the cost functional and its Lagrange multiplier.

\section{Numerical results}
\label{sec: Numerical results}
Here, we demonstrate how the a posteriori estimate (Theorem \ref{Thm:Main result}) ensures that our method delivers accurate price approximations. We validate our findings using the benchmarks provided by the linear-quadratic model. For numerical implementation, we employ the Tensorflow software library.

We set $T=1$ and $K=40$ for the time discretization. We assume that the supply follows
\begin{equation}
\label{eq:SDE supply Numerical}
	\d Q(t) = \left( b^S(t)-Q(t)\right)\d t + \sigma^S(t) \d W(t) , \quad Q(0)=0,
\end{equation}
where $b^S(t) = 3 \sin(3\pi t)$ and $\sigma^S(t) = \max\{ 0.5 \sin(2 \pi (t - 0.25)), 0\}$. The Brownian noise is applied on the time interval $[0.25,0.75]$ and generates deviations from the expected value, as illustrated in Figure \ref{fig:Q 2 samples} with two sample paths of the supply. The initial distribution $m_0$ is a normal distribution with mean $\overline{m}_0=-\tfrac{1}{4}$ and standard deviation $0.2$. The sample size for the training is $N_{\mbox{\tiny train}} = 30$. We train for $20$ epochs, an epoch consisting of $500$ training steps. We compute the MC estimate of the a posteriori estimate at the end of each epoch using $J = 60$ supply samples and a population size of $N_{\mbox{\tiny test}} = 30$. Empirically, the previous training parameters solved the trade-off between computational cost and accuracy.

\begin{figure}[htp]
     \centering
     \begin{subfigure}[t]{0.235\textwidth}
         \centering
         \includegraphics[width=\textwidth]{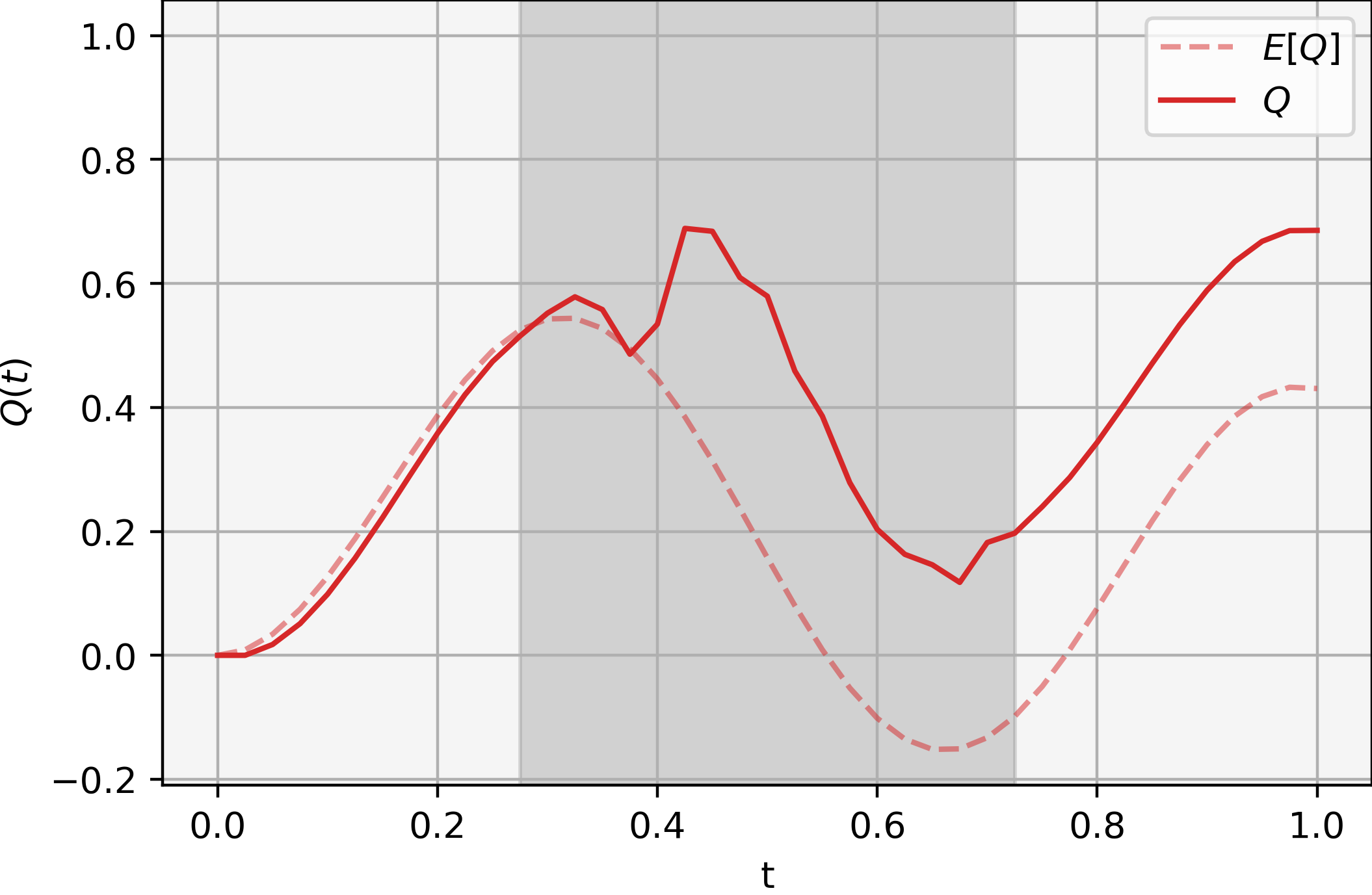}
         \caption{Supply realization.}
         \label{fig:Q1}
     \end{subfigure}
     \hfill
     \begin{subfigure}[t]{0.235\textwidth}
         \centering
         \includegraphics[width=\textwidth]{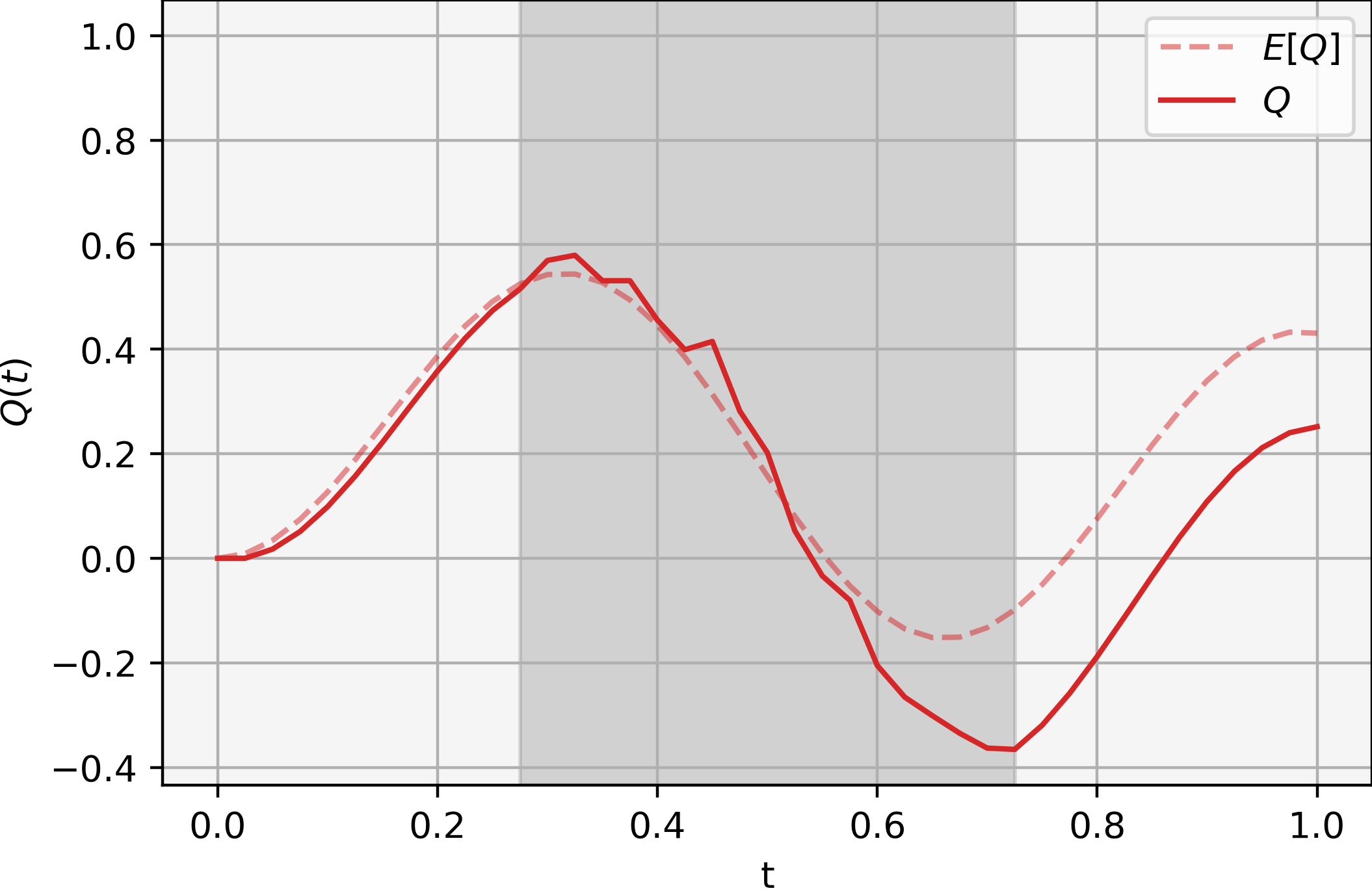}
         \caption{Supply realization.}
         \label{fig:Q2}
     \end{subfigure}
        \caption{Two supply realizations of \eqref{eq:SDE supply Numerical}. The grey window highlights the times where noise operates.}
        \label{fig:Q 2 samples}
\end{figure}
We select $L(x,v)=\frac{1}{2} \left( x - 1\right)^2 + \frac{1}{2} v^2$ and $u_T\left(x\right) = \frac{1}{2 e}\left(x-1\right)^2$. Figure \ref{fig:Estimates during training} shows the evolution of the a posteriori estimate in Theorem \ref{Thm:Main result}. The balance error achieves enough accuracy and slightly oscillates with a decreasing tendency. The optimality error also exhibits a decreasing trend, but accuracy does not improve, suggesting testing other combinations of training and discretization parameters.

\begin{figure}[htp]
     \centering
     \begin{subfigure}[t]{0.235\textwidth}
         \centering
         \includegraphics[width=\textwidth]{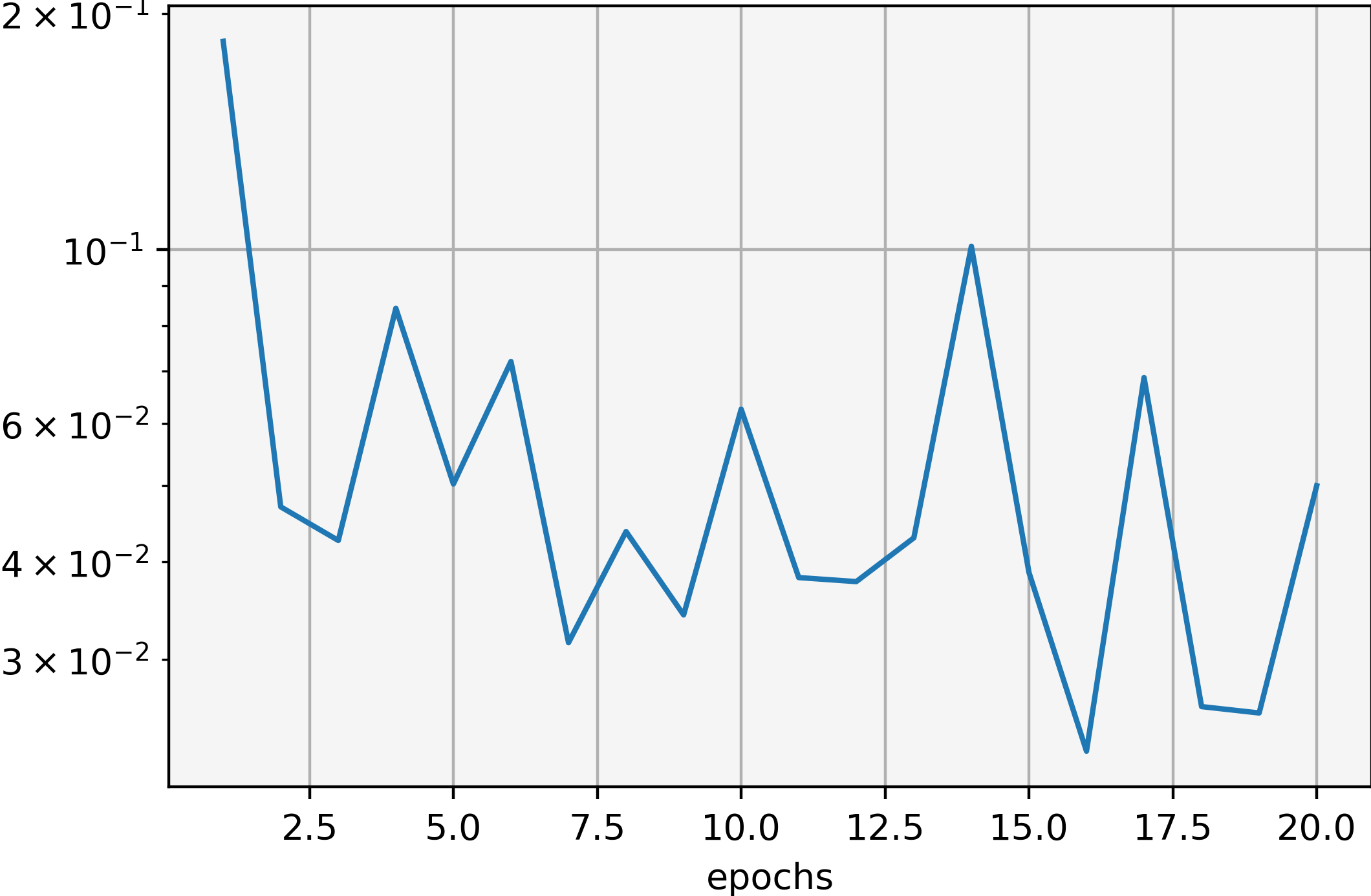}
         \caption{$\mbox{MSE}(\epsilon_B)$ (balance error).}         
     \end{subfigure}
     \hfill
     \begin{subfigure}[t]{0.235\textwidth}
         \centering
         \includegraphics[width=\textwidth]{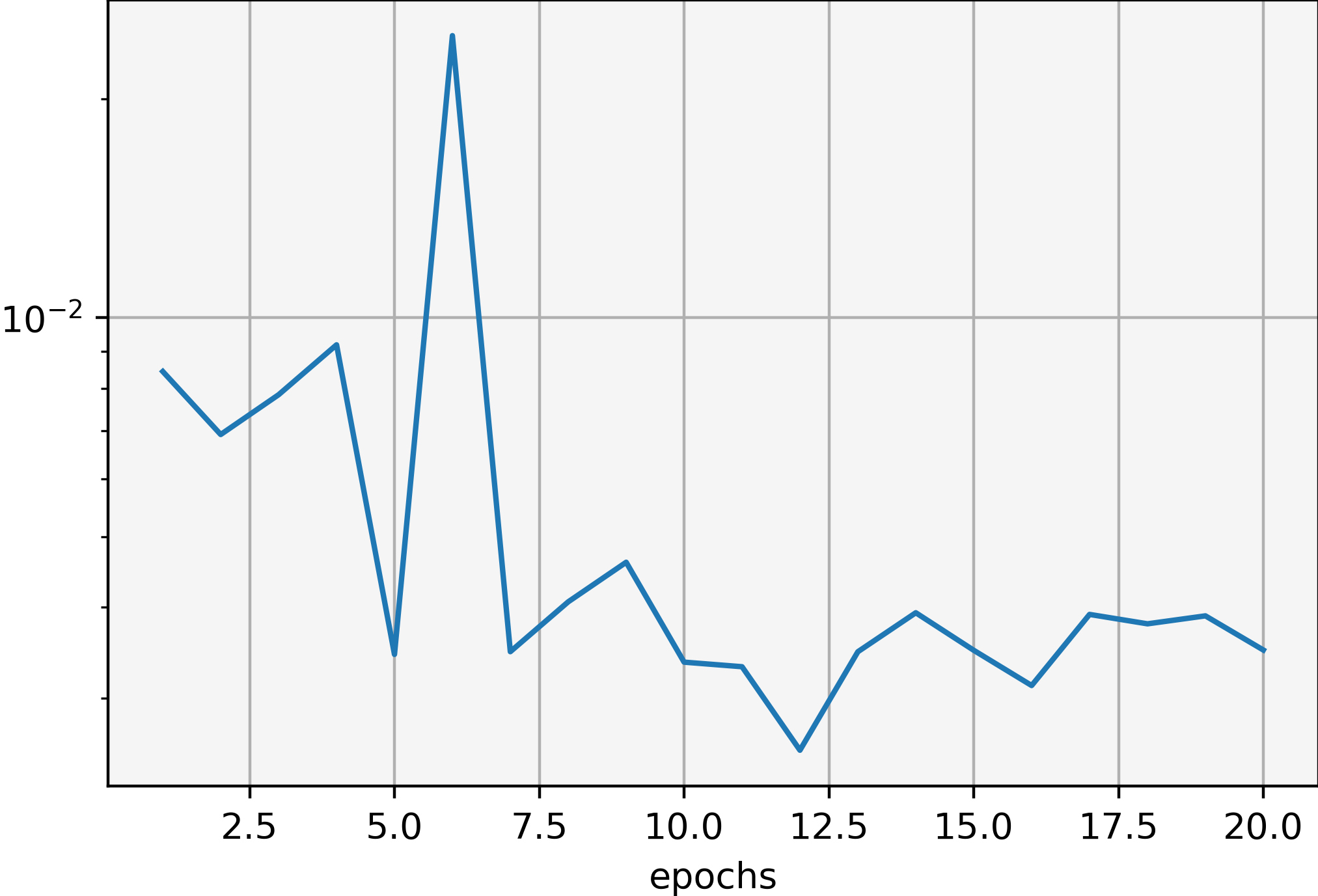}
		\caption{$\mbox{MSE}(\epsilon_H)$ (optimality error).}         
     \end{subfigure}
          \caption{Evolution of the a posteriori estimates during training.}
        \label{fig:Estimates during training}
\end{figure}
Furthermore, we use the analytic solution derived in \cite{gomes2021randomsupply} to verify the price approximation's accuracy. In the linear-quadratic framework, the price follows from the SDE  system 
\begin{equation}\label{eq:System price LQ}
	\begin{cases} 
	\d Q(t) =	(b^S(t)-Q_t) \d t +\sigma^S(t) \d W(t), 
	\\
	Q(0)=0,
	\\
	\d \overline{X}(t) = Q(t) \d t, 
	\\
	\overline{X}(0) =\overline{m}_0,
	\\
	\d \varpi(t) =	\left(\overline{X}(t) -  b^S(t) +Q(t) - 1 \right) \d t  
	\\
	\quad \quad \quad \quad - \frac{a_2^3(t) + 1}{a_2^4(t) +1} \sigma^S(t) \d W(t), 
	\\
	\varpi(0) =w_0.
	\end{cases}
\end{equation}
The value $w_0$ and the functions $a_2^3$ and $a_2^4$ are determined by a certain system of ordinary differential explicitly solvable.  
Figure \ref{fig:price 2 samples} shows the corresponding price approximation and exact price (obtained from \eqref{eq:System price LQ}) for the two supply realizations of Figure \ref{fig:Q 2 samples}. The decreasing trend in the errors observed Figure \ref{fig:Estimates during training} is reflected in the precise approximation observed in Figure \ref{fig:price 2 samples}. Notice the effect of the error in the time window $[0.25,0.75]$, which decreases, as expected, the accuracy of the approximation compared to the time region $[0,0.25]$, where no noise is applied.


\begin{figure}[htp]
     \centering
     \begin{subfigure}[t]{0.235\textwidth}
         \centering
         \includegraphics[width=\textwidth]{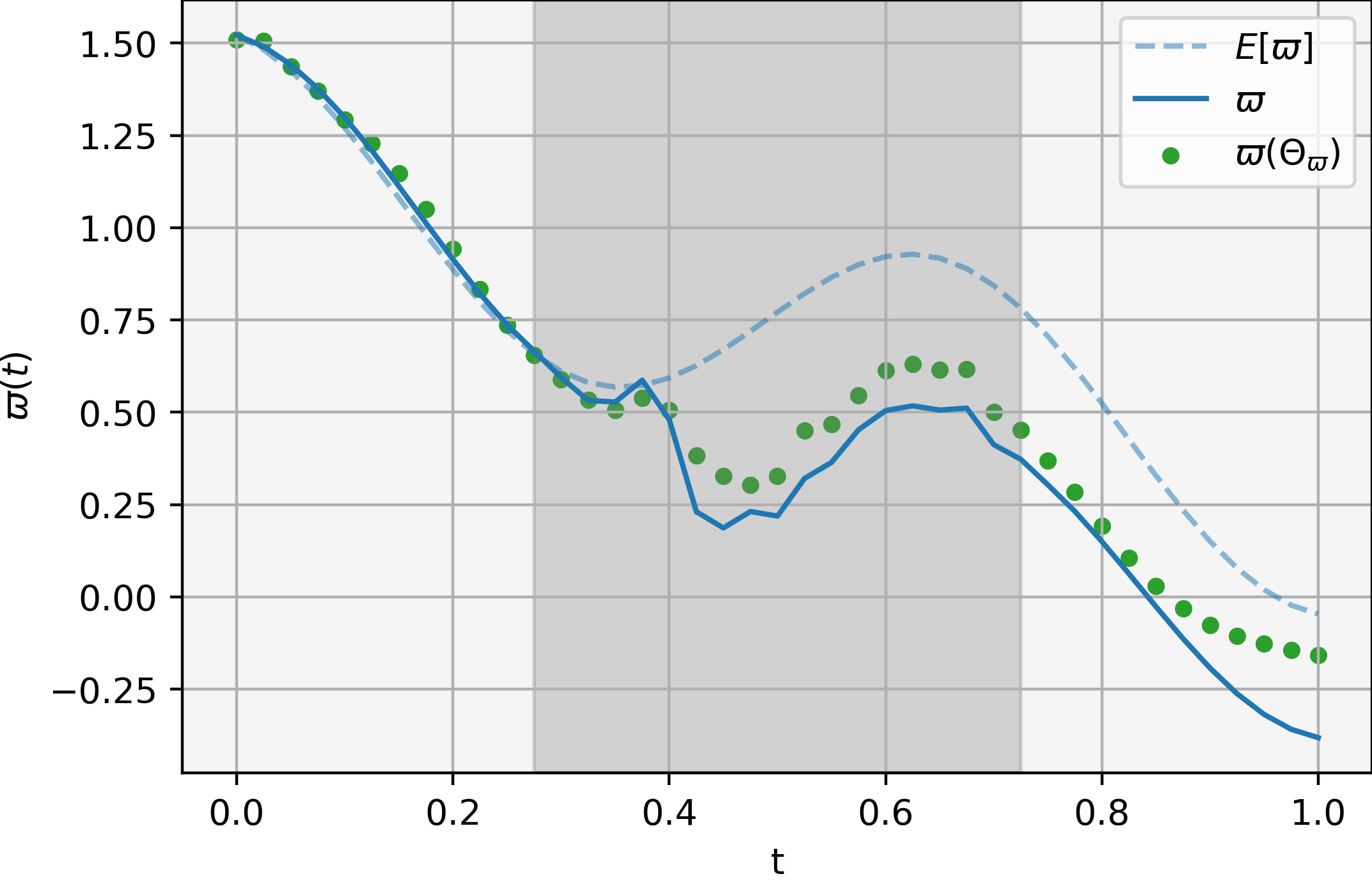}
         \caption{Price realization for Figure \ref{fig:Q1}.}         
     \end{subfigure}
     \hfill
     \begin{subfigure}[t]{0.235\textwidth}
         \centering
         \includegraphics[width=\textwidth]{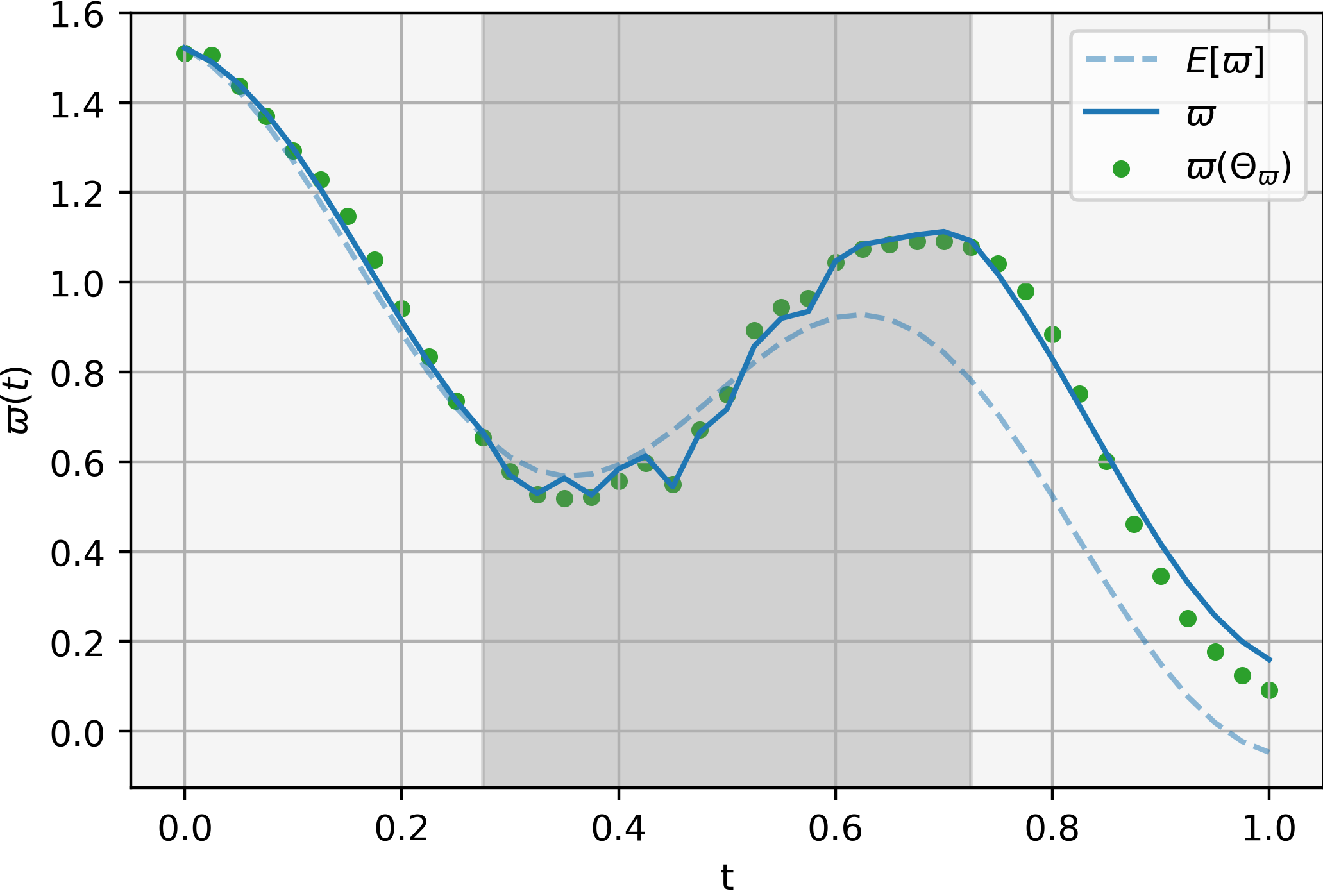}
         \caption{Price realization for Figure \ref{fig:Q2}.}         
     \end{subfigure}
        \caption{Exact price and RNN approximation for Figure \ref{fig:Q 2 samples}. The grey window highlights the times where noise operates.}
        \label{fig:price 2 samples}
\end{figure}

As the figures show, the method has an excellent performance in approximating solutions in various noise regimes. Further research and refinements can enhance its efficiency, speed, and accuracy, leading to even more precise approximations in critical regions.
\section{Conclusions and further directions}
\label{sec:Conclusions}

We extend the ML approach introduced in \cite{MLpriceDet2022} for the deterministic setting to  the common noise scenario, utilizing RNN architectures to represent non-anticipating controls. As in the deterministic case, our approach demonstrates good accuracy and performance.

Future research could explore method robustness concerning RNN and discretization parameter variations, addressing the trade-off between computational cost and accuracy. Comprehensive experiments may identify optimal RNN layer and neuron quantities for specific supply dynamics. Advanced coding methods could further reduce computational costs while maintaining or enhancing accuracy. Extensions could also accommodate additional noise sources, such as jump processes.

\addtolength{\textheight}{-12cm}   


\section*{ACKNOWLEDGMENT}

D. Gomes  and J. Gutierrez were supported by King Abdullah University of Science and Technology (KAUST) baseline funds and KAUST OSR-CRG2021-4674.


\bibliographystyle{abbrv}
\bibliography{mfg.bib}

\def\cprime{$'$}
\begin{thebibliography}{10}

\bibitem{IEEEPotential}
Y.~Ashrafyan, T.~Bakaryan, D.~Gomes, and J.~Gutierrez.
\newblock The potential method for price-formation models.
\newblock In {\em 2022 IEEE 61st Conference on Decision and Control (CDC)},
  pages 7565--7570, 2022.

\bibitem{FiniteStateCNoise}
C.~Belak, D.~Hoffmann, and F.~T. Seifried.
\newblock Continuous-time mean field games with finite state space and common
  noise.
\newblock {\em Applied Mathematics \& Optimization}, 84(3):3173--3216, 2021.

\bibitem{BenamouBrenierVarMFG}
J.-D. Benamou and G.~Carlier.
\newblock Augmented lagrangian methods for transport optimization, mean field
  games and degenerate elliptic equations.
\newblock {\em Journal of Optimization Theory and Applications}, 167(1):1--26,
  2015.

\bibitem{CarliniSilva2014}
E.~Carlini and F.~J. Silva.
\newblock A fully discrete semi-lagrangian scheme for a first order mean field
  game problem.
\newblock {\em SIAM Journal on Numerical Analysis}, 52(1):45--67, 2014.

\bibitem{carmonalauriere2021handbookdeep}
R.~Carmona and M.~Lauri{\`e}re.
\newblock Deep learning for mean field games and mean field control with
  applications to finance.
\newblock {\em To appear in Machine Learning And Data Sciences For Financial
  Markets (arXiv preprint arXiv:2107.04568)}, 2021.

\bibitem{dayanikli2023machine}
G.~Dayanikli and M.~Lauriere.
\newblock A machine learning method for stackelberg mean field games, 2023.

\bibitem{BackwardSDE1997}
N.~El~Karoui, S.~Peng, and M.~C. Quenez.
\newblock Backward stochastic differential equations in finance.
\newblock {\em Mathematical Finance}, 7(1):1--71, 1997.

\bibitem{MLpriceDet2022}
D.~Gomes, J.~Guti\'{e}rrez, and M.~Lauri\`{e}re.
\newblock Machine learning architectures for price formation models, 2022.

\bibitem{GoGuRi2021}
D.~Gomes, J.~Gutierrez, and R.~Ribeiro.
\newblock A mean field game price model with noise.
\newblock {\em Math. Eng.}, 3(4):Paper No. 028, 14, 2021.

\bibitem{gomes2021randomsupply}
D.~Gomes, J.~Gutierrez, and R.~Ribeiro.
\newblock A random-supply mean field game price model, 2021.

\bibitem{gomes2018mean}
D.~Gomes and J.~a. Sa\'{u}de.
\newblock A {M}ean-{F}ield {G}ame {A}pproach to {P}rice {F}ormation.
\newblock {\em Dyn. Games Appl.}, 11(1):29--53, 2021.

\bibitem{HADIKHANLOO2019369}
S.~Hadikhanloo and F.~J. Silva.
\newblock Finite mean field games: Fictitious play and convergence to a first
  order continuous mean field game.
\newblock {\em Journal de Math\'{e}matiques Pures et Appliqu\'{e}es},
  132:369--397, 2019.

\bibitem{han2021recurrent}
J.~Han and R.~Hu.
\newblock Recurrent neural networks for stochastic control problems with delay.
\newblock {\em Mathematics of Control, Signals, and Systems}, 33:775--795,
  2021.

\bibitem{HighamML}
C.~Higham and D.~Higham.
\newblock Deep learning: an introduction for applied mathematicians.
\newblock {\em SIAM Review}, 61(4):860–891, Nov. 2019.

\bibitem{LauriereEtAlSurvey}
M.~Laurière, S.~Perrin, M.~Geist, and O.~Pietquin.
\newblock Learning mean field games: A survey, 2022.

\bibitem{min2021signatured}
M.~Min and R.~Hu.
\newblock Signatured deep fictitious play for mean field games with common
  noise, 2021.

\bibitem{Yang2021}
C.~Mou, X.~Yang, and C.~Zhou.
\newblock Numerical methods for mean field games based on gaussian processes
  and fourier features, 2021.

\end{thebibliography}

\end{document}